\newcommand{\bC}{\mathbb{C}}
\newcommand{\bP}{\mathbb{P}}
\newcommand{\pu}{\bP^1_{\bC}}
\newcommand{\ppu}{\pu \times \pu}
\newcommand{\ccs}{(\bC^*)^2}
\newcommand{\xg}{x_g}
\newcommand{\yg}{y_g}
\newcommand{\vg}{\xg,\yg}
\newcommand{\xl}{x_l}
\newcommand{\yl}{y_l}
\newcommand{\hop}{HOMFLY polynomial}
\newcommand{\hos}[1]{\left( #1 \right)}
\newcommand{\mir}[1]{\overline{#1}}
\newcommand{\duek}{3_1}
\newcommand{\due}{\hos{3_1}}
\newcommand{\duep}{\hos{\mir{\duek}}}
\newcommand{\quattro}{\hos{4_1}}
\newcommand{\cinque}{\hos{5_2}}
\newcommand{\sei}{\hos{7_5}}
\newcommand{\sette}{\hos{8_{16}}}
\newcommand{\otto}{\hos{10_{150}}}
\newcommand{\nove}{\hos{8_{17}}}
\newcommand{\tredici}{\hos{8_{14}}}
\newcommand{\sedici}{\hos{9_{33}}}
\newcommand{\diciassette}{\hos{8_3}}
\newcommand{\dodicip}{\hos{\mir{7_5}}}
\newcommand{\quattordicip}{\hos{\mir{7_7}}}
\newcommand{\duecentoventip}{\hos{\mir{10_{136}}}}
\newcommand{\seicentotrentaseip}{\hos{\mir{11n_{20}}}}
\newcommand{\settecentoquarantap}{\hos{\mir{11n_{124}}}}
\newcommand{\qs}{\hos{9_{12}}}
\newcommand{\qvq}{\hos{11a_{175}}}
\newcommand{\qvc}{\hos{11a_{176}}}
\newcommand{\qsn}{\hos{11a_{220}}}
\newcommand{\ccc}{\hos{11a_{306}}}
\newcommand{\ncd}{\hos{12a_{151}}}
\newcommand{\nss}{\hos{12a_{165}}}
\newcommand{\ms}{\hos{12a_{259}}}
\newcommand{\mcu}{\hos{12a_{300}}}
\newcommand{\mdsd}{\hos{12a_{471}}}
\newcommand{\mtsei}{\hos{12a_{505}}}
\newcommand{\mtsette}{\hos{12a_{506}}}
\newcommand{\mtsed}{\hos{12a_{515}}}
\newcommand{\mtd}{\hos{12a_{517}}}
\newcommand{\mtts}{\hos{12a_{535}}}
\newcommand{\dccu}{\hos{12n_{462}}}
\newcommand{\dcon}{\hos{12n_{500}}}
    \newtheorem{Lem}{Lemma}
    \newtheorem{Prop}[Lem]{Proposition}
    \newtheorem{Thm}[Lem]{Theorem}
\theoremstyle{definition}
    \newtheorem{Rem}[Lem]{Remark}
\address{Douglas Blackwell}
\email{dougpblackwell@gmail.com}
\address{Damiano Testa, Mathematics Institute, University of Warwick, Coventry, CV4 7AL, UK}
\email{adomani@gmail.com, d.testa@warwick.ac.uk}
\begin{document}

\title[]{Factors of HOMFLY polynomials}

\author[]{Douglas Blackwell}
\author[]{Damiano Testa}

\begin{abstract}
We study factorizations of \hop{}s of certain knots and oriented links.  We begin with a computer analysis of knots with at most~$12$ crossings, finding 17 non-trivial factorizations.  Next, we give an irreducibility criterion for \hop{}s of oriented links associated to 2-connected plane graphs.
\end{abstract}

\subjclass[2010]{
57K14, 
57M25, 
14E22, 
14H50. 
}

\keywords{Knots, links, HOMFLY polynomials, Tutte polynomials, irreducibility}

\maketitle

\section*{Introduction}

Several properties of knots and links are encoded using polynomial invariants.  Many of the properties of these polynomials are of a combinatorial nature, such as the degree, or coordinate dependent, such as special evaluations.  For a few examples, see the Morton-Franks-Williams inequality \cites{frwi,mor,morp}, the slope conjecture~\cite{gar}, some evaluations of link polynomials~\cite{limi}, degree computations~\cite{vdv}.

In this paper, we propose to study a geometric property: irreducibility of the \hop.  Thus, we view the \hop{} of an oriented link as a plane algebraic curve and we ask if the curve is irreducible.  Since the \hop{} is really a Laurent polynomial, we disregard the coordinate axes in our analysis.

First, we perform a computer analysis of \hop{}s of the 2977 knots with at most 12 crossings: we find 17 non-trivial factorizations (Table~\ref{tab:fatto}).  To obtain the polynomials, we consulted the databases KnotInfo~\cite{kni} and KnotAtlas~\cites{kat}.  To factor them, we used the computer algebra program {\sc{Magma}}~\cite{magma}.

Second, we give a sufficient criterion for irreducibility of the \hop{}s of oriented links associated to plane graphs by Jaeger in~\cites{jaeger}.  A standard construction of Jaeger (\cite{jaeger}*{page 649}) associates to each connected plane graph~$G$ an oriented link diagram $D(G)$.  Jaeger shows that the \hop{} $P(D(G),x,y,z)$ can be computed from the Tutte polynomial $T_G(x,y)$ of~$G$ using the formula
\begin{equation} \label{e:sost}
P(D(G),x,y,z) =
\left( \frac{y}{z} \right)^{|V(G)|-1}
\left( -\frac{z}{x} \right)^{|E(G)|}
T_G \left(
-\frac{x}{y}
,
1-\frac{(x+y)y}{z^2}
\right).
\end{equation}
Thus, ignoring powers of $x,y,z$, the irreducibility of $T_G(x,y)$ is a necessary condition for the irreducibility of $P(D(G),x,y,z)$.  The Tutte polynomial of a 2-connected graph is irreducible by a result of Merino, de Mier and Noy (\cite{MMN}*{Theorem~1}).  Hence, we reduce the study of the irreducibility of the \hop{} $P(D(G),x,y,z)$ to understanding how the substitution in~\eqref{e:sost} interacts with the Tutte polynomial.

To achieve our goal, Proposition~\ref{p:simpl} simplifies formula~\eqref{e:sost}.  The verification of the identity is entirely mechanical, but our arguments hinge on the {\emph{existence}} of such a simple final result.  Next, Lemma~\ref{l:riqua} gives a sufficient criterion for irreducibility of polynomials, adapted to our needs.  We combine these statements in Theorem~\ref{t:irri}, the main criterion for irreducibility of \hop{}s of this paper.

\subsection*{Notation}

Let~$K$ be a knot.  To simplify our formulas, we denote by $\hos{K}$ the \hop{} of the knot~$K$, defined using the convention of~\cite{homfly}*{\sc{Main Theorem}}.  Thus, $\hos{K}$ is a homogeneous Laurent polynomial of degree~$0$ in $\mathbb{Z}[x^{\pm 1},y^{\pm 1},z^{\pm 1}]$: the numerator of~$\hos{K}$ is a homogeneous polynomial in $x,y,z$ and the denominator of~$\hos{K}$ is a monomial of the same degree as the numerator.  We denote by $\mir{K}$ the mirror image of the knot.  Recall that the \hop{} of the mirror image of a knot~$K$ satisfies the identity
\[
\hos{\mir{K}} (x,y,z) = \hos{K} (y,x,z).
\]
To identify knots, we follow the notation of \href{https://knotinfo.math.indiana.edu/}{KnotInfo}~\cite{kni}.  For convenience, we reproduce the part of the convention that is relevant for us:

\smallskip

\begin{center}
\begin{minipage}{350pt}
{\textsf{\small{``For knots with~$10$ or fewer crossings, we use the classical names, as tabulated for instance by Rolfsen, eliminating the duplicate $10_{162}$ from the count. For~$11$ crossing knots, we use the Dowker-Thistlethwaite name convention, based on the lexigraphical ordering of the minimal Dowker notation for each knot.''}}}
\end{minipage}
\end{center}

\smallskip

For instance, $4_1$ is the Figure-eight knot, while
\[
\duep = \frac{z^2}{y^2} - 2\frac{x}{y} -\frac{x^2}{y^2}
\]
is the \hop{} of the left-handed Trefoil knot.

{\emph{Caution.}}
The convention for the \hop{} used in~\cite{kni} differs from the one that we use.  We obtain the \hop{} $\hos{K}_{KI}$ of the knot~$K$ tabulated in~\cite{kni} by the substitution
\[
\hos{K}_{KI} = \hos{K} (v^{-1},v,-z).
\]
This happens in the background and plays almost no role in the arguments.

\section{Knots with up to 12 crossings}

We started this project wondering about irreducibility of \hop{}s.  A quick calculation with a computer, shows that the {\hop} of the knot $9_{12}$ is the product of the \hop{}s of the knots $4_1$ (Figure-eight knot) and $5_2$ (3-twist knot):
\[
\hos{9_{12}} = \hos{4_{1}} \hos{5_{2}}.
\]
Simlarly, also the identity
\[
\hos{11a_{175}} = \hos{3_{1}} \hos{8_{16}}
\]
holds.  Systematizing these results, we analyzed the knots with up to 12 crossings, using the database~\cite{kni}.  Out of these 2977 \hop{}s, 17 are reducible.  Each one of these 17 reducible polynomials is a product of previous members of the database.  When checking for divisibility, we work in the Laurent polynomial ring $\mathbb{Z}[x^{\pm 1}, y^{\pm 1},z^{\pm 1}]$, that is, we disregard powers, positive or negative, of $x,y,z$.  Still, the factorizations that we find are correct as stated: there is no need to adjust by multiplying by a unit.  We collect this data in Table~\ref{tab:fatto}.

\begin{table}[h]
\[
\begin{array}{|r@{\; = \;}l|}
\hline
{\vphantom{{W^W}^W}}
\qs & \quattro \cinque \\[5pt]
\qvq & \due \sette \\[5pt]
\qvc & \due \nove \\[5pt]
\qsn & \quattro \sei \\[5pt]
\ccc & \duep \sette \\[5pt]
\ncd & \cinque \quattordicip \\[5pt]
\hline
\end{array}
\quad
\begin{array}{|r@{\; = \;}l|}
\hline
{\vphantom{{W^W}^W}}
\nss & \cinque \duecentoventip \\[5pt]
\ms & \quattro \seicentotrentaseip \\[5pt]
\mcu & \quattro \tredici \\[5pt]
\mdsd & \quattro \diciassette \\[5pt]
\mtsei & \duep \sedici \\[5pt]
\mtsette & \quattro \nove \\[5pt]
\hline
\end{array}
\quad
\begin{array}{|r@{\; = \;}l|}
\hline
{\vphantom{{W^W}^W}}
\mtsed & \due \settecentoquarantap \\[5pt]
\mtd & \quattro \otto \\[5pt]
\mtts & \quattro \sette \\[5pt]
\dccu & \quattro^2 \\[5pt]
\dcon & \due \dodicip \\[20pt]
\hline
\end{array}
\]
\caption{Factorizations of \hop{}s}
\label{tab:fatto}
\end{table}

In particular, the \hop{} $\dccu$ is the only one having a repeated irreducible factor.  We observe also that the Kauffman polynomials of the 2977 knots with at most 12 crossings are all irreducible.

\section{Graphs and oriented link diagrams}

In this section, we prove a criterion for the irreducibility of the \hop{}s of certain oriented links associated to plane graphs.

To argue irreducibility, we exploit the morphism $\ccs \dashrightarrow \ccs$ appearing in \cite{jaeger}*{Proposition~1}:
\begin{eqnarray*}
J_0 \colon \quad
\ccs
& \longrightarrow &
\ccs \\{}
(x,y)
& \longmapsto &
\left(
-\frac{x}{y} , 1 - (x+y)y \right).
\end{eqnarray*}
We simplify the expression of~$J_0$ by changing coordinates on the domain and codomain of~$J_0$.  Denote by $\Xi \colon \ppu \dashrightarrow \ccs$ the birational map
\begin{eqnarray*}
\Xi \colon \quad
\ppu
& \dashrightarrow &
\ccs \\{}
\left(
[x_0,x_1],[y_0,y_1]
\right)
& \longmapsto &
\left(
\frac{y_0}{y_1} - \frac{x_1 y_0}{x_0 y_1} ,
\frac{x_1 y_0}{x_0 y_1}
\right)
\end{eqnarray*}
with birational inverse
\begin{eqnarray*}
\Xi^{-1} \colon \quad
\ccs
& \longrightarrow &
\ppu \\{}
(\xl,\yl)
& \longmapsto &
\left(
[\xl + \yl , \yl],[\xl + \yl , 1]
\right).
\end{eqnarray*}
Denote by $\Sigma \colon \ccs \to \ppu$ the birational morphism
\begin{eqnarray*}
\Sigma \colon \quad
\ccs
& \longrightarrow &
\ppu \\{}
(\vg)
& \longmapsto &
\bigl(
[1-\xg,1],[(1-\xg)(1-\yg),1]
\bigr)
\end{eqnarray*}
with birational inverse
\begin{eqnarray*}
\Sigma^{-1} \colon \quad
\ppu & \dashrightarrow &
\ccs
\\{}
\bigl(
[x_0,x_1],[y_0,y_1]
\bigr)
& \longmapsto &
\left(
1-\frac{x_0}{x_1} ,
1- \frac{x_1y_0}{x_0y_1}
\right).
\end{eqnarray*}

Define the morphism $J \colon \ppu \to \ppu$ by setting
\begin{eqnarray*}
J \colon \quad
\ppu
& \longrightarrow &
\ppu \\{}
\left(
[x_0,x_1] , [y_0,y_1]
\right)
& \longmapsto &
\left(
\left[ x_0, x_1 \right] ,
\left[ y_0^2 , y_1^2 \right]
\right)
.
\end{eqnarray*}

\begin{Prop} \label{p:simpl}
The rational maps $J$ and $\Sigma \circ J_0 \circ \Xi$ coincide.
\end{Prop}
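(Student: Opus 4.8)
The plan is to prove the equality by direct computation, using the principle that two rational maps between irreducible varieties which agree on a dense open subset coincide. Since $\ppu$ is irreducible and $J$ is a morphism while $\Sigma \circ J_0 \circ \Xi$ is a priori only rational, it suffices to exhibit a dense open subset of the source on which all three of $\Xi$, $J_0$, $\Sigma$ are regular and to check that the composite agrees there with $J$. Concretely, I would parametrize a general point of the source $\ppu$ by $([s,1],[t,1])$ with $s,t \in \bC^*$, and restrict to the dense open locus where in addition $s \ne 1$ and $t^2 \ne s$, so that each intermediate image lands in the torus on which the next map is defined.

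I would then chase this point through the composition. Applying $\Xi$ to $([s,1],[t,1])$ yields the point of $\ccs$ with coordinates
\[
(\xl,\yl) = \left( \frac{t(s-1)}{s}, \frac{t}{s} \right).
\]
Feeding this into $J_0$, the crucial simplification is the cancellation
\[
\xl + \yl = \frac{t(s-1)}{s} + \frac{t}{s} = t,
\]
which collapses the second coordinate of $J_0$; combined with $-\xl/\yl = 1-s$, this gives $(\xg,\yg) = \left( 1-s,\, 1 - t^2/s \right)$. Finally, applying $\Sigma$ I would use the companion cancellation
\[
(1-\xg)(1-\yg) = s \cdot \frac{t^2}{s} = t^2,
\]
together with $1-\xg = s$, so that $\Sigma(\xg,\yg) = \bigl( [s,1],[t^2,1] \bigr)$. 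This is exactly $J\bigl([s,1],[t,1]\bigr)$, since $J$ fixes the first factor and squares the homogeneous coordinates of the second.

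As the authors emphasize, the verification itself is entirely mechanical, so I do not expect any genuine obstacle in the sense of a hard estimate or a clever trick. The only points requiring care are bookkeeping ones: keeping the homogeneous coordinates straight through three successive maps, matching the codomain coordinates of each map to the domain coordinates of the next (so that the output of $J_0$ is read as $(\vg)$ for $\Sigma$), and confirming that the chosen open set is indeed dense and avoids the indeterminacy loci of the rational maps $\Xi$ and $\Sigma$. The substantive content of the statement — and the reason it is worth isolating as a proposition rather than buried in a formula manipulation — is precisely that the two cancellations $\xl+\yl = t$ and $(1-\xg)(1-\yg)=t^2$ conspire to reduce the entire composite to the pure squaring map $J$; this is the \emph{existence of a simple final result} that the subsequent irreducibility argument will exploit.
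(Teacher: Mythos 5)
Your proof is correct and takes the same route as the paper, whose entire proof is the remark that the claim ``is a matter of a simple substitution'': you simply carry out that substitution explicitly, with the dense-open-set bookkeeping handled properly and both key cancellations ($\xl+\yl=t$ and $(1-\xg)(1-\yg)=t^2$) verified accurately. Nothing further is needed.
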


\begin{proof}
This is a matter of a simple substitution, using the definition of the involved maps.
\end{proof}

The morphism~$J$ is finite of degree~$2$ and it is branched over the divisor $R \subset \ppu$ with equation $y_0y_1 = 0$.

In our argument for irreducibility, we exploit the following easy algebraic lemma.

\begin{Lem} \label{l:riqua}
Let $C \subset \ppu$ be an irreducible curve, defined by the equation $F(x_0,x_1,y_0,y_1)=0$.  Assume that the polynomial~$F$ is bihomogeneous of degree~$a$ in $x_0,x_1$ and of degree~$b$ in $y_0,y_1$.  If the curve with equation $F(x_0,x_1,y_0^2,y_1^2)=0$ is reducible, then the two polynomials $F(x_0,x_1,1,0)$ and $F(x_0,x_1,0,1)$ are squares.  In particular, the degree~$a$ is even.
\end{Lem}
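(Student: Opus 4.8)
The plan is to exploit the involution underlying the double cover $J$. Write $G(x_0,x_1,y_0,y_1) := F(x_0,x_1,y_0^2,y_1^2)$, so that the curve in the statement is $\{G=0\} = J^{-1}(C)$, and let $\iota$ denote the deck transformation $[y_0,y_1]\mapsto[-y_0,y_1]$ of the degree-$2$ cover $J$, which fixes $G$. Since $J$ is finite of degree~$2$ branched over $R=\{y_0y_1=0\}$, the first step is to pin down the shape of the factorization of $G$: I claim that if $\{G=0\}$ is reducible, then $G=\gamma\,P\cdot\iota(P)$ for some irreducible bihomogeneous $P$ with $\iota(P)\neq P$ and some $\gamma\in\mathbb{C}^\times$, where $P$ and $\iota(P)$ each have degree $b$ in $y_0,y_1$.

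To establish this I would first dispose of the two degenerate cases $C=\{y_0=0\}$ and $C=\{y_1=0\}$: since $C$ is irreducible these force $F$ to be a scalar multiple of $y_0$ or of $y_1$, whence $G$ equals $\gamma y_0^2$ or $\gamma y_1^2$ and the conclusion holds at once (with $a=0$). Assuming henceforth $C\not\subset R$, the curve $C$ meets $R$ in finitely many points, so $J\colon J^{-1}(C)\to C$ is an étale double cover away from a finite set; in particular $J^{-1}(C)$ is reduced, each of its components dominates $C$, and the induced extension of function fields has degree~$2$, so there are at most two components. If $\{G=0\}$ is reducible it therefore has exactly two irreducible components, and these must be interchanged by $\iota$: indeed $\iota$ generates the deck group and swaps the two points of a general fibre of $J$ over $C$, so it cannot fix both components. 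This yields $G=\gamma\,P\,\iota(P)$, and, as $\iota$ preserves $y$-degrees while $G$ has $y$-degree $2b$, the factors $P$ and $\iota(P)$ have $y$-degree $b$ each. This structural step is the main obstacle, and it is precisely where the fact that $J$ is finite of degree~$2$ branched over $R$ is used.

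With the factorization in hand, the conclusion follows by restricting to the two rulings in the branch locus. Along $\{y_0=0\}$ the involution $\iota$ acts as the identity, so $\iota(P)$ and $P$ restrict to the same polynomial, giving $G|_{y_0=0}=\gamma\,(P|_{y_0=0})^2$; comparing with the direct evaluation $G|_{y_0=0}=F(x_0,x_1,0,1)\,y_1^{2b}$ and writing $P|_{y_0=0}=p(x_0,x_1)\,y_1^{b}$ shows that $F(x_0,x_1,0,1)=\gamma\,p^2$ is a square over $\mathbb{C}$. Similarly, along $\{y_1=0\}$ one has $\iota(P)|_{y_1=0}=(-1)^{b}\,P|_{y_1=0}$, so $F(x_0,x_1,1,0)$ is again a constant times a bihomogeneous square, hence a square. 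Finally, comparing degrees in $x_0,x_1$ on the two sides of $G=\gamma\,P\,\iota(P)$ gives $a=2\deg_x P$, so $a$ is even; alternatively, this is immediate once $F(x_0,x_1,0,1)$ is known to be a square of degree~$a$.
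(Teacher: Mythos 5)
Your argument is correct, and it reaches the crucial factorization by a genuinely different, geometric route. The paper never mentions the deck transformation: it passes to the four affine charts, takes an irreducible factor $g(x,y)$ of $f(x,y^2)$, splits it into even and odd parts $g(x,y)=g_0(x,y^2)+y\,g_1(x,y^2)$, rules out the degenerate cases $g_0=0$ and $g_1=0$ by hand, observes that $g(x,-y)$ is then not proportional to $g(x,y)$ and also divides $f(x,y^2)$, and concludes by irreducibility that $f(x,y^2)=g(x,y)g(x,-y)=g_0(x,y^2)^2-y^2g_1(x,y^2)^2$, so that setting $y=0$ gives $f(x,0)=g_0(x,0)^2$. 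Your involution $\iota$ is exactly the substitution $y\mapsto -y$ in disguise, and your identity $G=\gamma\,P\,\iota(P)$ is the paper's $f(x,y^2)=g(x,y)g(x,-y)$; the difference is how you get there: globally, via the facts that $J^{*}C$ is reduced because $J$ is \'etale off $R$, that there are at most two components because $\deg J=2$ and every component dominates $C$, and that the deck involution must swap the two components because it exchanges the two points of a general fibre, which avoids the fixed locus $y_0y_1=0$. Your route explains structurally why the factorization must have this shape and would generalize to other Galois covers in place of $J$; the paper's route is more elementary (no reducedness, no component counting) and, as its Remark points out, transfers verbatim to any field of characteristic different from $2$ --- note that your \'etaleness step likewise silently uses characteristic $0$ (or at least $\neq 2$). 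Three harmless fill-ins for your write-up: when you set $P|_{y_0=0}=p(x_0,x_1)\,y_1^{b}$ you should allow $p=0$, in which case $F(x_0,x_1,0,1)=0$ is trivially a square (and in fact $p=0$ would force $y_0\mid P$, returning you to the degenerate case); the constants $\gamma$ and $(-1)^{b}$ are absorbed because every element of $\bC$ has a square root, which is where working over $\bC$ (as in the paper) matters; and the phrase about ``the induced extension of function fields'' should be replaced by the general-fibre count you in fact use, since when $J^{-1}(C)$ is reducible there is no field extension, only a degree bound from $\deg J=2$.
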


\begin{proof}
We cover $\ppu$ by~$4$ standard affine charts isomorphic to~$\mathbb{A}^2_{\bC}$, by setting one among~$x_0$ or~$x_1$ to~$1$ and also one among~$y_0$ or~$y_1$ to~$1$.  Fix one of these charts.  The bihomogeneous polynomial~$F$ becomes an irreducible polynomial $f(x,y) \in \bC[x,y]$.  To prove the result, we show that if the polynomial $f(x,y^2)$ is reducible, then $f(x,0) \in \bC[x]$ is a square.

Let $g(x,y) \in \bC[x,y]$ be an irreducible factor of $f(x,y^2)$.  Since $f(x,y^2)$ is not irreducible, we deduce the inequality $g(x,y) \neq f(x,y^2)$.  Separating the terms of $g(x,y)$ with an even and an odd exponent of~$y$, we find polynomials $g_0(x,y^2)$ and $g_1(x,y^2)$ such that the identity
\[
g(x,y) = g_0(x,y^2) + y g_1(x,y^2)
\]
holds.  If $g_0(x,y^2)$ vanishes, then we are done.  Suppose therefore that $g_0(x,y^2)$ is not the zero polynomial.  If $g_1(x,y^2)$ vanishes, then $g_0(x,y^2)$ is a proper factor of $f(x,y^2)$; as a consequence, $g_0(x,y)$ is a proper factor of $f(x,y)$, contradicting the irreducibility of $f(x,y)$.  It follows that $g(x,-y)$ is a polynomial that is not proportional to $g(x,y)$ and that also divides $f(x,y^2)$.  By irreducibility of $g(x,y)$, we deduce that the product $g(x,y)g(x,-y)$ divides $f(x,y^2)$.  By irreducibility of $f(x,y)$, we deduce that the product $g(x,y)g(x,-y)$ actually equals $f(x,y^2)$.
We therefore find
\[
f(x,y^2) = g_0(x,y^2)^2 -y^2 g_1(x,y^2).
\]
Setting~$y$ to~$0$, we conclude that the identity $f(x,0) = g_0(x,0)^2$ holds, as needed.
\end{proof}

\begin{Rem}
The statement above still holds replacing the complex numbers by any field~$k$.  If the characteristic of~$k$ is different from~$2$, then the given proof goes through essentially unchanged.  If the characteristic of~$k$ is~$2$, then the statement follows from \cite{stacks}*{\href{https://stacks.math.columbia.edu/tag/0BRA}{Tag 0BRA}}: in this case, the morphism~$J$ is purely inseparable and hence a homeomorphism.
\end{Rem}

Suppose that~$G$ is a connected plane graph and denote by $T_G(x,y)$ the Tutte polynomial of~$G$.  Denote by $D(G)$ the associated link diagram constructed by Jaeger~\cite{jaeger}.  We do not reproduce here the construction of~$D(G)$: we refer the interested reader to \cite{jaeger}*{Section~2}.  All that we need is that the identity
\[
\hos{D(G)} = 
\left( \frac{y}{z} \right)^{|V(G)|-1}
\left( -\frac{z}{x} \right)^{|E(G)|}
T_G \left(
-\frac{x}{y} , 1 - \frac{(x+y)y}{z^2}
\right)
\]
holds (see \cite{jaeger}*{Proposition~1}).

We are interested in the irreducibility of the HOMFLY polynomial of the link diagram $D(G)$.

We view HOMFLY polynomials as elements of the Laurent polynomial ring $L = \bC[x^{\pm 1},y^{\pm 1},z^{\pm 1}]$.  Thus, irreducibility of a non-zero element $f \in L$ means that any factorisation $f=gh$, with $g,h \in L$, implies that either~$g$ or~$h$ has the form $\alpha \, x^ay^bz^c$, with $\alpha \in \bC$ and $a,b,c \in \mathbb{Z}$.

For a polynomial $t(\vg)$ in the coordinates $\vg$ of $\ccs$, we want to read the information about the ramification of the morphism~$J$.  Thus, we take the strict transform under~$\Sigma^{-1}$ of the vanishing set of $t(\vg)$ and intersect the closure of this locus with $y_0=0$ and $y_1=0$.  We summarize the outcome of this easy computation in the following lemma for future reference.

\begin{Lem} \label{l:res}
Let $t(\vg) = \sum _{i,j} t_{ij} \xg^i \yg^j$ be a polynomial in $\bC[\vg]$ and let $T \subset \ccs$ be the curve defined by the equation $t(\vg) = 0$.  Let $d \in \mathbb{N}$ be the largest exponent of~$\yg$ among the monomials appearing in $t(\vg)$ with non-zero coefficient. 
\begin{itemize}
\item
An equation for the intersection $\overline{\Sigma (T)} \cap \{y_0=0 , \; x_0 x_1 \neq 0 \}$ is
\[
t \left( 1-\frac{x_0}{x_1},1 \right) = 0.
\]
\item
An equation for the intersection $\overline{\Sigma (T)} \cap \{y_1=0 , \; x_0 x_1 \neq 0 \}$ is
\[
\sum_{i}t_{i d} \left( 1-\frac{x_0}{x_1} \right)^i =0.
\]
\end{itemize}
\end{Lem}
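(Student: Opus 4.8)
The plan is to realize $\overline{\Sigma(T)}$, over the open set $\{x_0 x_1 \neq 0\}$, as the zero locus of a single expression obtained by substituting $\Sigma^{-1}$ into $t$, and then to read off the two equations by restricting this expression to the divisors $\{y_0 = 0\}$ and $\{y_1 = 0\}$. Concretely, on the dense torus $\Sigma$ is invertible with inverse $\Sigma^{-1}\bigl([x_0,x_1],[y_0,y_1]\bigr) = \bigl(1 - \tfrac{x_0}{x_1},\, 1 - \tfrac{x_1 y_0}{x_0 y_1}\bigr)$, so the image $\Sigma(T)$ is cut out there by $t\bigl(1 - \tfrac{x_0}{x_1},\, 1 - \tfrac{x_1 y_0}{x_0 y_1}\bigr) = 0$. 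Since we never leave $\{x_0 x_1 \neq 0\}$, there is no need to clear the denominators in $x_0,x_1$ (they are units); I would only homogenize in the second factor by multiplying through by $y_1^d$, producing
\[
G = \sum_{i,j} t_{ij} \Bigl(1 - \frac{x_0}{x_1}\Bigr)^i y_1^{d-j}\Bigl(y_1 - \frac{x_1}{x_0}\, y_0\Bigr)^j,
\]
which is regular on $\{x_0 x_1 \neq 0\}$ and homogeneous of degree $d$ in $(y_0,y_1)$, and which cuts out $\overline{\Sigma(T)}$ on that locus.

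Next I would simply restrict $G$ to each divisor and discard the factors that are invertible there. Setting $y_0 = 0$ collapses $(y_1 - \tfrac{x_1}{x_0} y_0)^j$ to $y_1^j$, so $G|_{y_0 = 0} = y_1^d\, t\bigl(1 - \tfrac{x_0}{x_1}, 1\bigr)$; since $y_1 \neq 0$ on $\{y_0 = 0\}$, the factor $y_1^d$ is a unit and the first equation $t\bigl(1 - \tfrac{x_0}{x_1}, 1\bigr) = 0$ drops out. Setting $y_1 = 0$ kills every summand except those with $j = d$, on account of the factor $y_1^{d-j}$; the surviving terms give $G|_{y_1 = 0} = \bigl(-\tfrac{x_1}{x_0} y_0\bigr)^d \sum_i t_{id}\bigl(1 - \tfrac{x_0}{x_1}\bigr)^i$, and since $x_0 x_1 \neq 0$ and $y_0 \neq 0$ on $\{y_1 = 0\}$, the prefactor is again a unit, leaving $\sum_i t_{id}\bigl(1 - \tfrac{x_0}{x_1}\bigr)^i = 0$. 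Both identities are routine once $G$ is in hand, and the maximality of $d$ is exactly what guarantees that the $j = d$ terms are the ones that survive on $\{y_1 = 0\}$.

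The step that needs genuine care — and the main obstacle — is justifying that $G = 0$ really describes $\overline{\Sigma(T)}$ near these two divisors, rather than introducing or missing a whole divisorial component. The point is that $G$, as a form of degree $d$ in $(y_0,y_1)$ over the functions regular on $\{x_0 x_1 \neq 0\}$, is divisible neither by $y_0$ nor (outside a degenerate case) by $y_1$: its $y_0^d$-coefficient equals $\bigl(-\tfrac{x_1}{x_0}\bigr)^d \sum_i t_{id}\bigl(1 - \tfrac{x_0}{x_1}\bigr)^i$, which is nonzero precisely because $d$ is the largest exponent of $\yg$ occurring in $t$, so $\{y_1 = 0\}$ is not a component of $\{G = 0\}$ and meets it in the expected finite set. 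Divisibility by $y_0$ would force $t(\xg, 1) \equiv 0$, i.e. $T$ would contain the line $\yg = 1$, whose image under $\Sigma$ is exactly $\{y_0 = 0\}$; in that degenerate case the asserted equation $t\bigl(1 - \tfrac{x_0}{x_1}, 1\bigr) = 0$ degenerates to $0 = 0$ and still correctly records that the whole divisor lies in $\overline{\Sigma(T)}$. Thus the statement is robust, and the only real content is the bookkeeping that clearing denominators and restricting commute with taking the closure on $\{x_0 x_1 \neq 0\}$.
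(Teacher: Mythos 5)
Your proof is correct and takes essentially the same route as the paper: the paper likewise obtains the equation of $\overline{\Sigma(T)}$ by setting to zero the numerator of the evaluation $t\left(1-\frac{x_0}{x_1},\,1-\frac{x_1y_0}{x_0y_1}\right)$ and then reads off the two stated identities by restricting to $\{y_0=0\}$ and $\{y_1=0\}$, leaving the computation implicit. Your explicit form $G$, and your check that neither $y_0$ nor (barring the degenerate case) $y_1$ divides it, simply spell out in detail what the paper dismisses as a straightforward computation.
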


\begin{proof}
We obtain an equation vanishing of the curve $\overline{\Sigma(T)}$ by setting to~$0$ the numerator of the evaluation $t \left(
1-\frac{x_0}{x_1} ,
1- \frac{x_1y_0}{x_0y_1}
\right)$.  It is now a matter of a straightforward computation to check that the stated identities hold.
\end{proof}

Let~$G$ be a connected plane graph.  We define two curves $P_G \subset \ccs$ and $\mathscr{P}_G \subset \ppu$.  We set
\[
P_G \colon \quad
\hos{D(G)} (\xl, \yl,1) = 0
\quad \subset \quad
\ccs ,
\]
and
\[
\mathscr{P}_G =
\overline{\Xi^{-1} (P_G)}
\quad \subset \quad
\ppu .
\]
Similarly, we define two curves $T_G \subset \ccs$ and $\mathscr{T}_G \subset \ppu$.  We set
\[
T_G \colon \quad
T_G \left( \vg \right) = 0
\quad \subset \quad
\ccs ,
\]
and
\[
\mathscr{T}_G =
\overline{\Sigma (T_G)}
\quad \subset \quad
\ppu .
\]
Thus,~$\mathscr{P}_G$ and~$\mathscr{T}_G$ are, essentially, the vanishing of the HOMFLY polynomial of the oriented link $D(G)$ and of the Tutte polynomial of~$G$, respectively.

As a consequence of the definitions and of \cite{jaeger}*{Proposition~1}, we deduce that there is a diagram
\[
\begin{array}{ccc}
\ppu
& \stackrel{J}{\longrightarrow} &
\ppu \\{}
\cup
& &
\cup \\
\mathscr{P}_G & \longrightarrow & \mathscr{T}_G
\end{array}
\]
and $J|_{\mathscr{P}_G} \colon \mathscr{P}_G \to \mathscr{T}_G$ is therefore a branched double cover.

\begin{Thm} \label{t:irri}
Let~$G$ be a $2$-connected plane graph.  If the HOMFLY polynomial $\hos{D(G)}$ is reducible then
\begin{itemize}
\item
the number of edges of~$G$ is even;
\item
the number of vertices of~$G$ is even;
\item
the polynomial $T_G(x,1)$ is a square.
\end{itemize}
\end{Thm}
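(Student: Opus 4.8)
The plan is to convert the reducibility of $\hos{D(G)}$ into reducibility of the double cover $J|_{\mathscr{P}_G}\colon \mathscr{P}_G \to \mathscr{T}_G$, and then feed the irreducibility of the Tutte polynomial into Lemma~\ref{l:riqua}. Since $G$ is $2$-connected, the theorem of Merino--de Mier--Noy \cite{MMN} gives that $T_G(x,y)$ is irreducible; as $\Sigma$ is birational, the curve $\mathscr{T}_G=\overline{\Sigma(T_G)}$ is then irreducible, cut out by an irreducible bihomogeneous polynomial $F$. On the other side, because $\hos{D(G)}$ is homogeneous of degree~$0$, dehomogenizing at $z=1$ induces a bijection on non-monomial factors, so reducibility of $\hos{D(G)}$ in $L$ is equivalent to reducibility of the curve $P_G$, and hence, $\Xi$ being birational, to reducibility of $\mathscr{P}_G$. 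I would record this equivalence first, as a short lemma, to keep the main argument clean.

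Next I would exploit the branched double cover established just before the statement. By Proposition~\ref{p:simpl} and the definition of $J$, the preimage $J^{-1}(\mathscr{T}_G)$ is exactly the curve defined by $F(x_0,x_1,y_0^2,y_1^2)=0$, and this equals $\mathscr{P}_G$ since both are degree~$2$ over $\mathscr{T}_G$. Thus, assuming $\hos{D(G)}$ reducible, the curve $\mathscr{P}_G$ is reducible, and Lemma~\ref{l:riqua} applied to $C=\mathscr{T}_G$ yields three pieces of data at once: the degree $a$ of $F$ in $x_0,x_1$ is even, and the two boundary restrictions $F(x_0,x_1,0,1)$ and $F(x_0,x_1,1,0)$, that is, the slices of $\mathscr{T}_G$ over the branch divisor $R=\{y_0y_1=0\}$, are perfect squares.

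It then remains to translate these three outputs into the stated graph-theoretic conditions, which is where Lemma~\ref{l:res} enters. Substituting $\Sigma^{-1}$ into $T_G$ and clearing denominators, one computes $\deg_x \mathscr{T}_G = \deg_x T_G + \deg_y T_G = (|V(G)|-1)+(|E(G)|-|V(G)|+1)=|E(G)|$, so the conclusion ``$a$ even'' is precisely the edge condition. By the first bullet of Lemma~\ref{l:res}, the slice $F(x_0,x_1,0,1)$ is the homogenization of $T_G(1-x_0/x_1,1)$; since $x\mapsto 1-x$ is invertible, this is a square if and only if $T_G(x,1)$ is, which gives the Tutte condition, and since $T_G(x,1)$ has degree $|V(G)|-1$ with leading coefficient~$1$, the square condition fixes the parity of $|V(G)|$, yielding the vertex condition. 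By the second bullet of Lemma~\ref{l:res}, the slice $F(x_0,x_1,1,0)$ is governed by the coefficient of the top power of $y$ in $T_G$; for a bridgeless, in particular $2$-connected, graph this coefficient is the constant~$1$, so the slice is a pure power of $x_1$ and contributes nothing beyond the edge condition.

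The main obstacle I anticipate is exactly this final bookkeeping. One must check that the polynomial $F$ produced by the substitution carries no spurious coordinate-monomial factors, so that it genuinely is the irreducible equation of $\mathscr{T}_G$ and its $x$-degree really equals $|E(G)|$; and one must correctly identify the extremal coefficients of the Tutte polynomial (the top $y$-coefficient equal to~$1$, the leading $x$-coefficient equal to~$1$) that make the two boundary slices transparent. These verifications are elementary, but they must be carried out with care, since the entire parity conclusion rests on the exact degrees and on the slices being read off correctly from Lemma~\ref{l:res}.
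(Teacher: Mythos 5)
Your route is the paper's own, step for step: irreducibility of $T_G(x,y)$ from \cite{MMN}, transfer along the birational maps of Proposition~\ref{p:simpl} to get $\widetilde{T}_G(x_0,x_1,y_0^2,y_1^2)=0$ as an equation of $\mathscr{P}_G$, then Lemma~\ref{l:riqua} combined with Lemma~\ref{l:res}. Two of the three bullets ($\#E(G)$ even, $T_G(x,1)$ a square) come out exactly as in the paper. But there is a genuine gap at the vertex-parity step, precisely the ``final bookkeeping'' you flagged as the anticipated obstacle. You correctly record that $T_G(x,1)$ is monic of degree $\#V(G)-1$ (indeed $T_G(x,1)=\sum_k f_k\,(x-1)^{\#V(G)-1-k}$, where $f_k$ counts $k$-edge forests, so the top term is $(x-1)^{\#V(G)-1}$). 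A square polynomial has even degree, so ``$T_G(x,1)$ is a square'' forces $\#V(G)-1$ even, i.e.\ $\#V(G)$ \emph{odd} --- the opposite of the bullet you claim to obtain. Writing that the square condition ``fixes the parity of $|V(G)|$, yielding the vertex condition'' conceals this reversal: your own (correct) degree computation refutes, rather than proves, the second bullet as printed. Note that this is not a fixable slip on your side alone: the statement's bullets are mutually inconsistent (a monic square has even degree), and the paper's proof reaches ``$\#V(G)$ even'' only via the degree formula $\deg T_G(x,1)=\#E(G)-h_1(G)-1$, which is off by one --- the correct value is $\#E(G)-h_1(G)=\#V(G)-1$ (check $G=C_3$: $T(x,1)=x^2+x+1$ has degree $2$, not $1$).

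A secondary inaccuracy in your write-up points the same way. You assert that $F(x_0,x_1,0,1)$ ``is the homogenization of $T_G(1-x_0/x_1,1)$''; in fact it is $x_0^{h_1(G)}$ times that homogenization. Lemma~\ref{l:res} is stated only on the locus $x_0x_1\neq 0$, so it discards exactly such monomial factors; your proposed check that $F$ itself has no spurious monomial factor is correct but beside the point, since the monomial appears in the \emph{slice}, not in $F$ (for $C_3$ one computes $\widetilde{T}_G(x,1,0,1)=x\bigl((1-x)^2+(1-x)+1\bigr)$). Since $T_G(1,1)\neq 0$, the factor $x_0^{h_1(G)}$ is coprime to the homogenized part, so the square condition of Lemma~\ref{l:riqua} yields the additional conclusion that $h_1(G)$ is even; together with $\#E(G)$ even this gives $\#V(G)=\#E(G)-h_1(G)+1$ odd, confirming the parity obtained from the monic-degree argument. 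So your proposal, carried out precisely, proves ``$\#E(G)$ even, $T_G(x,1)$ a square, $\#V(G)$ odd'' and is not a proof of the statement as printed; as a by-product, it shows that the open question in the paper's final section (a $2$-connected plane graph with $\#V(G)$ and $\#E(G)$ even and $T_G(x,1)$ a square) has a trivially negative answer for parity reasons, which is further evidence that the intended second bullet should read ``odd.''
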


\begin{proof}
Let $\widetilde{T}_G(x_0,x_1,y_0,y_1) \in \mathbb{Z}[x_0,x_1,y_0,y_1]$ be the numerator of the Laurent polynomial $T_G \circ \Sigma^{-1}$.  By construction, the polynomial $\widetilde{T}_G$ is bihomogeneous of degree $h_1(G)$ in $y_0,y_1$ and of degree
\[
\deg_x T_G + \deg_y T_G =
\#V(G)-1+h_1(G) = \#E(G)
\]
in $x_0,x_1$.  The vanishing set of~$\widetilde{T}_G$ in $\ppu$ is the curve~$\mathscr{T}_G$.

Because the graph~$G$ is $2$-connected, the Tutte polynomial $T_G(x,y)$ is irreducible by \cite{MMN}*{Theorem~1}: even though the cited paper states irreducibility over~$\mathbb{Z}$, the authors mention, and their argument shows, that $T_G(x,y)$ is also irreducible in $\bC[x,y]$.  Since~$\Sigma$ is a birational map, the polynomial~$\widetilde{T}_G$ is also irreducible, and hence so is the curve~$\mathscr{T}_G$.  An equation of curve~$\mathscr{P}_G$ is $\widetilde{T}_G(x_0,x_1,y_0^2,y_1^2) = 0$.  If~$\mathscr{P}_G$ is reducible, then we are in a position to apply Lemma~\ref{l:riqua}.  We deduce that the number of edges of~$G$, the degree of~$\widetilde{T}_G$ with respect to $x_0,x_1$, is even.  Using Lemma~\ref{l:res}, we evaluate $\widetilde{T}_G(x,1,0,1)$ and we find
\[
\widetilde{T}_G(x,1,0,1) = T_G(1-x,1) .
\]
We obtain that the evaluation $T_G(1-x,1)$, or, equivalently, $T_G(x,1)$, is a square.  Finally, since the degree of $T_G(x,1)$ is $\#E(G)-h_1(G)-1$, and we already argued that $\#E(G)$ is even, we deduce that $h_1(G)-1$ is even.  Since~$G$ is connected, the identity $h_1(G) -1 = \#E(G)-\#V(G)$ holds.  As we already showed that $\#E(G)$ is even, we conclude that $\#V(G)$ is even and the proof is complete.
\end{proof}

\begin{Rem} \label{r:fap}
Let~$G$ be a finite graph.  Define a simplicial complex $F(G)$ on the edges of~$G$ by letting $\sigma \subset E(G)$ be a face of $F(G)$ if and only if $\sigma$ contains no cycle.  The evaluation $T_G(1-x,1)$ is the face polynomial of the simplicial complex $F(G)$.
\end{Rem}

\section{Further directions}

We found that every reducible \hop{} of a knot with at most~$12$ crossings is itself the product of irreducible \hop{}s of knots.  We would find it surprising if this was always the case.  Nevertheless, it would be interesting to study further the divisibility properties of \hop{}s of knots (or even of links).  At an experimental level, extensive tables of \hop{}s of knots and links are available, so gathering further evidence is easily within reach.  At a conceptual level, we would find it very interesting to predict factorizations of \hop{}s, without having to look them up in tables.

We could not find a 2-connected plane graph~$G$ with an even number of vertices and of edges and such that the evaluation $T_G(x,1)$ is a square, nor we could prove that they do not exist.  Our expectation is that such graphs do not exist.  If this were the case, then it would follow from Theorem~\ref{t:irri} that the \hop{}s of the oriented links associated to 2-connected plane graphs are all irreducible.  Using Remark~\ref{r:fap} we can reformulate one of the conditions on the graph saying that the face polynomial of a simplicial complex is a square.  We have never come across a similar condition.

\begin{bibdiv}
\begin{biblist}

\bib{kat}{article}{
   author={Bar-Natan, Dror},
   author={Morrison, Scott},
   title = {\href{http://katlas.org}{The {K}not {A}tlas}}
}

\bib{magma}{article}{
   author={Bosma, Wieb},
   author={Cannon, John},
   author={Playoust, Catherine},
   title={The Magma algebra system. I. The user language},
   note={Computational algebra and number theory (London, 1993)},
   journal={J. Symbolic Comput.},
   volume={24},
   date={1997},
   number={3-4},
   pages={235--265}
}

\bib{frwi}{article}{
   author={Franks, John},
   author={Williams, Robert F.},
   title={Braids and the Jones polynomial},
   journal={Trans. Amer. Math. Soc.},
   volume={303},
   date={1987},
   number={1},
   pages={97--108}
}

\bib{gar}{article}{
   author={Garoufalidis, Stavros},
   title={The Jones slopes of a knot},
   journal={Quantum Topol.},
   volume={2},
   date={2011},
   number={1},
   pages={43--69}
}

\bib{homfly}{article}{
   author={Freyd, Peter},
   author={Yetter, David},
   author={Hoste, Jim},
   author={Lickorish, W. B. Raymond},
   author={Millett, Kenneth},
   author={Ocneanu, Adrian},
   title={A new polynomial invariant of knots and links},
   journal={Bull. Amer. Math. Soc. (N.S.)},
   volume={12},
   date={1985},
   number={2},
   pages={239--246}
}

\bib{jaeger}{article}{
   author={Jaeger, Fran\c{c}ois},
   title={Tutte polynomials and link polynomials},
   journal={Proc. Amer. Math. Soc.},
   volume={103},
   date={1988},
   number={2},
   pages={647--654}
}

\bib{limi}{article}{
   author={Lickorish, W. B. Raymond},
   author={Millett, Kenneth C.},
   title={Some evaluations of link polynomials},
   journal={Comment. Math. Helv.},
   volume={61},
   date={1986},
   number={3},
   pages={349--359}
}

\bib{kni}{article}{
author = {Livingston, Charles},
author = {Moore, Allison H.},
Title = {\href{https://knotinfo.math.indiana.edu}{KnotInfo: Table of Knot Invariants}},
year = {{2020}}
}

\bib{MMN}{article}{
   author={Merino, Criel},
   author={de Mier, Anna},
   author={Noy, Marc},
   title={Irreducibility of the Tutte polynomial of a connected matroid},
   journal={J. Combin. Theory Ser. B},
   volume={83},
   date={2001},
   number={2},
   pages={298--304}
}

\bib{mor}{article}{
   author={Morton, Hugh R.},
   title={Seifert circles and knot polynomials},
   journal={Math. Proc. Cambridge Philos. Soc.},
   volume={99},
   date={1986},
   number={1},
   pages={107--109}
}

\bib{morp}{article}{
   author={Morton, Hugh R.},
   title={Polynomials from braids},
   conference={
      title={Braids},
      address={Santa Cruz, CA},
      date={1986},
   },
   book={
      series={Contemp. Math.},
      volume={78},
      publisher={Amer. Math. Soc., Providence, RI},
   },
   date={1988},
   pages={575--585}
}

\bib{stacks}{article}{
    author       = {{Stacks Project Authors}, The},
    title        = {\href{https://stacks.math.columbia.edu}{The Stacks project}},
    year         = {2020}
}

\bib{vdv}{article}{
   author={van der Veen, Roland},
   title={{\href{https://arxiv.org/abs/1501.00123}{The degree of the colored HOMFLY polynomial}}},
   eprint = {arXiv:1501.00123}
}

\end{biblist}
\end{bibdiv}

\end{document}